\documentclass[11pt]{amsart}
\usepackage{amssymb,amsmath,amsthm,enumitem,tikz,graphicx}
\setlength{\topmargin}{-0.5cm}
\setlength{\topskip}{0cm}
\setlength{\parskip}{1pt}
\setlength{\oddsidemargin}{0.0cm}
\setlength{\evensidemargin}{0.0cm}
\setlength{\textwidth}{16.4cm}
\setlength{\textheight}{22.5cm}
\setlength{\headsep}{0.8cm}
\setlength{\jot}{4.5pt}
\setlength{\footskip}{1cm}

\theoremstyle{plain}
\newtheorem{theorem}{Theorem}[section]

\newcommand{\CC}{{\mathbb C}}
\newcommand{\RR}{{\mathbb R}}
\newcommand{\ZZ}{{\mathbb Z}}

\begin{document}

\title{The norm of the  resolvent of the Volterra operator}
\date{25 July 2022}

\author{Thomas Ransford}
\address{D\'epartement de math\'ematiques et de statistique, Universit\'e Laval,
Qu\'ebec City (Qu\'ebec),  Canada G1V 0A6.}
\email{thomas.ransford@mat.ulaval.ca}

\begin{abstract}
In this expository note, 
we compute the exact value of the norm of the resolvent of the Volterra operator.
\end{abstract}



\maketitle


\section{Introduction}\label{S:intro}

Let $V: L^2[0,1]\to L^2[0,1]$ be the  Volterra operator, defined by
\[
Vf(x):=\int_0^x f(t)\,dt.
\]
It is well known that $V$ is a compact  quasi-nilpotent operator, and that its adjoint is given by
\[
V^*f(x)=\int_x^1 f(t)\,dt.
\]
It is also well known (see \cite[Problem~188]{Ha82}) that 
\begin{equation}\label{E:Halmos}
\|V\|=2/\pi.
\end{equation}
A great deal of research has been devoted to computing or estimating $\|V^n\|$ for higher powers of~$V$.
The  best  estimates, to our knowledge, are established in \cite{BD09}, 
which also contains a summary of the history of the problem.

For more general polynomials $p$, the problem of computing $\|p(V)\|$ was addressed by Lyubich 
and Tsedenabayar in \cite{LT10}. They outlined a program for calculating the singular values of $p(V)$,
and carried out this program in detail for the case when $p$ has degree one. 

The singular values of an invertible operator permit us to determine not only its norm
(the largest singular value) but also the norm of its inverse (the reciprocal of the smallest singular value).
Thus the computation the singular values of $I+\nu V$ carried out by Lyubich 
and Tsedenabayar
leads to a formula for the norms of $(I+\nu V)^{-1}$ and hence of the resolvent operators $(V-\mu I)^{-1}$.
They did not state the formula explicitly, so we give it here.


\section{Statement of the main result}\label{S:main result}

\begin{theorem}\label{T:main}
Let $\mu=\alpha+i\beta\in\CC\setminus\{0\}$.
\begin{enumerate}[label=\rm({\roman*}),topsep=0pt, itemsep=0pt]

\item\label{I:i} If $\alpha>\beta^2$, then 
\[
\|(V-\mu I)^{-1}\|=\frac{1}{\sqrt{\alpha^2-\gamma^2}},
\]
where $\gamma$ is the unique solution in $(0,\alpha)$ to the equation
\[
\coth\Bigl(\frac{\gamma}{\beta^2+\gamma^2}\Bigr)=\frac{\alpha}{\gamma}.
\]

\item\label{I:ii} If  $\alpha=\beta^2$, then
\[
\|(V-\mu I)^{-1}\|=\frac{1}{\alpha}.
\]

\item\label{I:iii}  If  $\alpha<\beta^2$ and $\beta\ne0$, then
\[
\|(V-\mu I)^{-1}\|=\frac{1}{\sqrt{\alpha^2+\gamma^2}},
\]
where $\gamma$ is the unique positive number such that  $\gamma/(\beta^2-\gamma^2)\in(0,\pi)$ and
\[
\cot\Bigl(\frac{\gamma}{\beta^2-\gamma^2}\Bigr)=\frac{\alpha}{\gamma}.
\]

\item\label{I:iv} If  $\alpha< 0$ and $\beta=0$, then
\[
\|(V-\mu I)^{-1}\|=1/|\alpha|.
\]
\end{enumerate}
\end{theorem}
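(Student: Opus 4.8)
The plan is to reduce the problem to finding the smallest point of the spectrum of the positive operator $A:=(V-\mu I)^*(V-\mu I)$. Since $\mu\ne0$ and $\sigma(V)=\{0\}$, the operator $V-\mu I$ is invertible, so substituting $f=(V-\mu I)^{-1}g$ in the definition of the operator norm gives
\[
\|(V-\mu I)^{-1}\|^2=\sup_{f\ne0}\frac{\|f\|^2}{\|(V-\mu I)f\|^2}=\frac{1}{\min\sigma(A)}.
\]
Moreover $A-|\mu|^2I=V^*V-\mu V^*-\bar\mu V$ is compact, so $\sigma(A)$ consists of $|\mu|^2$ together with eigenvalues accumulating only at $|\mu|^2$; hence $\min\sigma(A)$ is either an eigenvalue of $A$ lying below $|\mu|^2$, or else equal to $|\mu|^2=\alpha^2+\beta^2$. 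It therefore suffices to find the eigenvalues of $A$ and compare them with $\alpha^2+\beta^2$. (Since $\|(V-\mu I)^{-1}\|=|\mu|^{-1}\|(I-\mu^{-1}V)^{-1}\|$, this amounts to the singular-value computation for $I-\mu^{-1}V$ of \cite{LT10}, but I would carry it out directly.)

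Next I would convert $Af=\lambda f$, $f\ne0$, into a boundary value problem. Put $g:=(V-\mu I)f$, so that $(V^*-\bar\mu I)g=\lambda f$; differentiating these two integral identities and using their values at $x=0$ and $x=1$, one eliminates $g$ to obtain
\[
(|\mu|^2-\lambda)f''+2i\beta f'-f=0,
\]
with boundary conditions $(|\mu|^2-\lambda)f'(0)+2i\beta f(0)=0$ and $(\lambda-\bar\mu^2)f(1)+\bar\mu(|\mu|^2-\lambda)f'(1)=0$; conversely, any nonzero solution of this problem is an eigenfunction of $A$. Writing $\delta$ for a square root of $\alpha^2-\lambda$ and setting $c:=|\mu|^2-\lambda=\beta^2+\delta^2$, the general solution of the ODE (when $\delta c\ne0$) is $e^{-i\beta x/c}(Ae^{\delta x/c}+Be^{-\delta x/c})$; substituting into the two boundary conditions and eliminating $A$ and $B$ collapses everything, after a short computation, to the single characteristic equation
\[
(\alpha-\delta)\,e^{2\delta/c}=\alpha+\delta,
\]
whose positive real solutions $\lambda$ are precisely the eigenvalues of $A$ other than the degenerate values treated next. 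Those are handled by hand: $c=0$ forces $Vf=2i\beta f$, which is impossible for $\beta\ne0$ and forces $f=0$ when $\beta=0$; and $\delta=0$, i.e. $\lambda=\alpha^2$, leads via the double-root solution $f=(A+Bx)e^{-ix/\beta}$ to the lone condition $\alpha=\beta^2$.

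I would then locate $\min\sigma(A)$ by splitting on the sign of $\alpha^2-\lambda$. For $\lambda\in(0,\alpha^2)$ take $\delta=\gamma\in(0,|\alpha|)$: the characteristic equation becomes $\gamma\coth(\gamma/(\beta^2+\gamma^2))=\alpha$, and since $\coth t>1/t$ its left side exceeds $\beta^2+\gamma^2$, so a solution can occur only when $\alpha>\beta^2$, in which case $\gamma\in(0,\alpha)$ automatically. For $\lambda>\alpha^2$ take $\delta=i\gamma$: the equation becomes $\gamma\cot(\gamma/(\beta^2-\gamma^2))=\alpha$. Now I argue case by case. In case~(i), the $\coth$-equation has a unique solution $\gamma\in(0,\alpha)$ (by a monotonicity argument), giving an eigenvalue $\alpha^2-\gamma^2<\alpha^2$; as every other eigenvalue is $\ge\alpha^2$, this is $\min\sigma(A)$, and (i) follows. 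In case~(ii), $\alpha=\beta^2$ is not $>\beta^2$, so there is no eigenvalue below $\alpha^2$, while $\lambda=\alpha^2$ is one; hence $\min\sigma(A)=\alpha^2$, giving (ii). In case~(iv) ($\beta=0$, $\alpha<0$) I would argue separately: $(V+V^*)f$ is the constant function equal to $\int_0^1 f$, so $V+V^*\ge0$, whence $A=\alpha^2I+V^*V-\alpha(V+V^*)\ge\alpha^2I$; together with $\alpha^2\in\sigma(A)$ this yields $\min\sigma(A)=\alpha^2$, giving (iv). In case~(iii) there is no eigenvalue in $(0,\alpha^2]$, and I would show that the smallest positive solution $\gamma$ of the $\cot$-equation satisfies $\gamma/(\beta^2-\gamma^2)\in(0,\pi)$ and is the unique solution in that branch, so that $\min\sigma(A)=\alpha^2+\gamma^2$, giving (iii). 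In each case, inserting $\min\sigma(A)$ into the first display produces the stated formula.

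The reduction to $\sigma(A)$, the ODE, and the characteristic equation are all routine; the real work — and the main obstacle — is the final step, namely showing that the relevant transcendental equation has a unique root in the prescribed interval and, above all, that this root furnishes $\min\sigma(A)$ and not merely some eigenvalue. This requires a careful monotonicity and sign analysis of functions such as $\gamma\mapsto\gamma\coth(\gamma/(\beta^2+\gamma^2))$ on $(0,\alpha)$ and $\gamma\mapsto\gamma/(\beta^2-\gamma^2)-\operatorname{arccot}(\alpha/\gamma)$ on $(0,|\beta|)$; case~(iii), where infinitely many eigenvalues are present and the first must be singled out, will be the most delicate.
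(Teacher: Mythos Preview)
Your proposal is correct and follows essentially the same route as the paper: reduce to the smallest eigenvalue of $(V-\mu I)^*(V-\mu I)$, convert the eigenvalue equation to the second-order ODE with two boundary conditions, extract the transcendental characteristic equation, and split into the $\coth$/double-root/$\cot$ cases according to the sign of $\alpha^2-\lambda$. The only noteworthy deviations are cosmetic or local: your second boundary condition is stated in terms of $f'(1)$ rather than $\int_0^1 f$ (the two are equivalent via the once-differentiated integral equation), and for part~(iv) you bypass the eigenvalue analysis entirely with the direct inequality $V+V^*\ge0$, which is slicker than the paper's argument of checking that no eigenvalue lies below $|\mu|^2$ in any of the three cases.
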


Figure~\ref{F:domain} illustrates the various curves and regions in $(\alpha,\beta)$-space in which the parts~\ref{I:i}--\ref{I:iv}
of the theorem apply. The origin is excluded.

\begin{figure}[ht]
\begin{center}
\small
\begin{tikzpicture}[scale=0.5]
\draw[<-,thick] (-4,0) -- (0,0);
\draw[->,thick] plot[domain=0:2] ({(\x^2)},{\x});
\draw[->,thick] plot[domain=0:2] ({(\x^2)},{-\x});
\draw (3,0) node[right]{(i)};
\draw (4,2) node[right]{(ii)};
\draw (4,-2) node[right]{(ii)};
\draw (0,3) node[left]{(iii)};
\draw (0,-3) node[left]{(iii)};
\draw (-4,0) node[left]{(iv)};
\draw[->,dotted] (0,0)--(2,0);
\draw[->,dotted] (0,0)--(0,2);
\draw (2,0) node[below]{$\alpha$};
\draw (0,2) node[right]{$\beta$};
\draw[solid] (0,0) circle (4pt);
\path [draw=none,fill=white, fill opacity = 1] (0,0) circle (4pt);
\draw[-] (-6,-4)--(6,-4)--(6,4)--(-6,4)--(-6,-4);
\end{tikzpicture}
\caption{Curves and regions (i)--(iv) in Theorem~\ref{T:main}}\label{F:domain}
\end{center}
\end{figure}
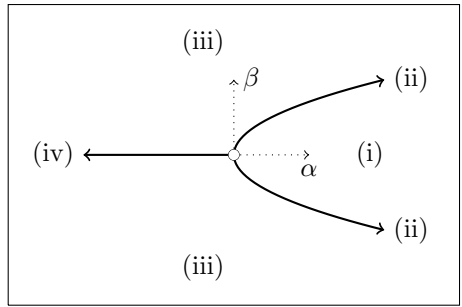

\bigskip

Figure~\ref{F:contour} is a contour map of the function $\mu\mapsto\|(V-\mu I)^{-1}\|$, computed using the theorem,
and produced using Maple.

\begin{figure}[ht]
\begin{center}
\includegraphics[scale=0.5, trim= 0 350 0 50]{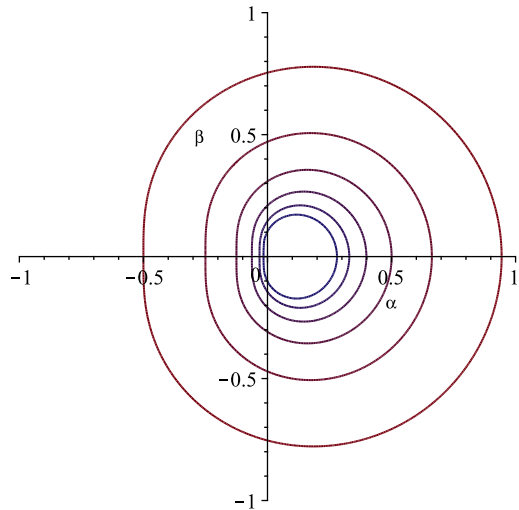}
\caption{Contour map of $\mu\mapsto\|(V-\mu I)^{-1}\|$. 
From the outside in, the curves are at levels $2,4,8,16,32,64$}
\label{F:contour}
\end{center}
\end{figure}


\section{Remarks on Theorem~\ref{T:main}}\label{S:remarks}

(1) A quick inspection of the statement of the theorem reveals a reflectional symmetry $\beta\leftrightarrow-\beta$.
In other words, we have 
\[
\|(V-\mu I)^{-1}\|=\|(V-\overline{\mu})^{-1}\|\quad(\mu\in\CC\setminus\{0\}).
\]
This  could also be deduced from the fact that $V=U^{-1}V^*U$, where $U:L^2[0,1]\to L^2[0,1]$ is the 
unitary operator $Uf(x):=f(1-x)$. Indeed, this implies that $(V-\mu I)^{-1}$ is unitarily equivalent to the adjoint of
$(V-\overline{\mu}I)^{-1}$, which of course has the same norm as $(V-\overline{\mu}I)^{-1}$.

\bigskip

(2) Part~\ref{I:iv} of the theorem can be reformulated as saying that
\[
\|(I-\nu V)^{-1}\|=1 \quad(\nu\in(-\infty,0]).
\]
This fact has been known for a long time. A short direct proof can be found in \cite[p.302]{Ha82}.
Note also that parts~\ref{I:i}--\ref{I:iii} together  imply the converse result, namely
\[
\|(I-\nu V)^{-1}\|>1 \quad(\nu\in\CC\setminus(-\infty,0]).
\]

\bigskip

(3) If $\alpha=0$, then there is an explicit expression for the norm of the resolvent of $V$.

\begin{theorem}\label{T:imag}
For $\beta\ne0$,
\[
\| (V-i\beta I)^{-1}\|=\frac{1+\sqrt{1+\beta^2\pi^2}}{\pi\beta^2}.
\]
\end{theorem}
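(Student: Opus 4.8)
The plan is to obtain Theorem~\ref{T:imag} as the special case $\alpha=0$ of part~\ref{I:iii} of Theorem~\ref{T:main}. Since $\alpha=0<\beta^2$ whenever $\beta\ne0$, case~\ref{I:iii} applies, and it gives
\[
\|(V-i\beta I)^{-1}\|=\frac{1}{\sqrt{\alpha^2+\gamma^2}}=\frac{1}{\gamma},
\]
where $\gamma$ is the unique positive number with $\gamma/(\beta^2-\gamma^2)\in(0,\pi)$ satisfying $\cot\bigl(\gamma/(\beta^2-\gamma^2)\bigr)=\alpha/\gamma=0$. So the first task is simply to compute this $\gamma$ explicitly.

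Since $\cot\theta=0$ has the unique solution $\theta=\pi/2$ in $(0,\pi)$, the defining condition collapses to $\gamma/(\beta^2-\gamma^2)=\pi/2$, i.e.\ to the quadratic equation
\[
\pi\gamma^2+2\gamma-\pi\beta^2=0.
\]
This has exactly one positive root, namely $\gamma=\bigl(\sqrt{1+\beta^2\pi^2}-1\bigr)/\pi$; moreover this value satisfies $\gamma/(\beta^2-\gamma^2)=\pi/2\in(0,\pi)$ by construction (equivalently $\gamma<|\beta|$, which follows from $\sqrt{1+\beta^2\pi^2}<1+\pi|\beta|$ upon squaring), so it is indeed the $\gamma$ furnished by Theorem~\ref{T:main}.

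It then remains to substitute and rationalise the denominator:
\[
\|(V-i\beta I)^{-1}\|=\frac{1}{\gamma}=\frac{\pi}{\sqrt{1+\beta^2\pi^2}-1}
=\frac{\pi\bigl(\sqrt{1+\beta^2\pi^2}+1\bigr)}{(1+\beta^2\pi^2)-1}=\frac{1+\sqrt{1+\beta^2\pi^2}}{\pi\beta^2},
\]
which is the asserted formula. There is no genuine obstacle here, since all the analytic content is already contained in Theorem~\ref{T:main}: this statement is just the agreeable special case in which the transcendental equation degenerates to a quadratic. The only points deserving a word are that the negative root of the quadratic is discarded because $\gamma>0$ is required, and that the interval constraint $\gamma/(\beta^2-\gamma^2)\in(0,\pi)$ holds automatically for the chosen root.
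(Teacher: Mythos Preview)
Your argument is correct and follows exactly the same approach as the paper: apply part~\ref{I:iii} of Theorem~\ref{T:main} with $\alpha=0$, reduce the transcendental condition to $\gamma/(\beta^2-\gamma^2)=\pi/2$, and solve the resulting quadratic. You have simply spelled out the quadratic formula and the rationalisation step that the paper leaves to the reader.
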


\begin{proof}
By part~\ref{I:iii} of the Theorem~\ref{T:main}, applied with $\alpha=0$, we have $\|(V-i\beta I)^{-1}\|=1/\gamma$,
where $\gamma/(\beta^2-\gamma^2)\in(0,\pi)$ and $\cot(\gamma/(\beta^2-\gamma^2))=0$.
These conditions imply that $\gamma/(\beta^2-\gamma^2)=\pi/2$. Solving this quadratic equation for $\gamma$
leads directly to the result.
\end{proof}

Theorem~\ref{T:imag} can be reformulated equivalently as saying that
\[
\|(I-i\tau V)^{-1}\|=\frac{\tau}{\pi}+\sqrt{1+\frac{\tau^2}{\pi^2}} \quad(\tau\in\RR).
\]
It is interesting to note that, using the value of $\|I+i\tau V\|$ already calculated in \cite[Corollary~2.4]{LT10}, we have
\[
\|(I-i\tau V)^{-1}\|=\|I+i\tau V\| \quad(\tau\in\RR).
\]
This could also have been deduced by more abstract means,
since $(I-i\tau V)^{-1}=W(I+i\tau V)W^{-1}$,
where $W:L^2[0,1]\to L^2[0,1]$ is the unitary operator given by $W f(x):=e^{i\tau x}f(x)$.
This identity is an observation due to
Thomas Pedersen, see \cite[p.15]{Al97}.

\bigskip

(4) By part~\ref{I:iv} of Theorem~\ref{T:main}, we have $\|(V-\alpha I)^{-1}\|=1/|\alpha|$ for all $\alpha<0$.
The following result shows that, by contrast, $\|(V-\alpha I)^{-1}\|$ really explodes when $\alpha$ tends to zero from above.

\begin{theorem}\label{T:explode}
As $\alpha\to0^+$,
\[
\|(V-\alpha I)^{-1}\|=\frac{e^{1/\alpha}}{2\alpha}+O\Bigl(\frac{e^{-1/\alpha}}{\alpha^2}\Bigr).
\]
\end{theorem}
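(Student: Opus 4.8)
The plan is to read off the relevant data from part~\ref{I:i} of Theorem~\ref{T:main} and then analyse precisely the implicit equation defining $\gamma$. Since $\alpha>0=\beta^2$ here, part~\ref{I:i} applies with $\beta=0$ and gives
\[
\|(V-\alpha I)^{-1}\|=\frac{1}{\sqrt{\alpha^2-\gamma^2}},
\]
where $\gamma$ is the unique number in $(0,\alpha)$ with $\coth(1/\gamma)=\alpha/\gamma$. Putting $t:=1/\gamma$ turns this into
\[
\coth t=\alpha t,\qquad t>1/\alpha,
\]
so that $\alpha^2-\gamma^2=\alpha^2-1/t^2$, and the whole problem becomes one of locating the root $t=t(\alpha)$ with enough accuracy as $\alpha\to0^+$.

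The first step is an a priori estimate. Since $\coth$ is strictly decreasing on $(0,\infty)$ with $\coth t\downarrow1$, while $\alpha t$ is increasing, the root satisfies $1<\alpha t<\coth(1/\alpha)$; together with $\coth(1/\alpha)-1=2e^{-2/\alpha}/(1-e^{-2/\alpha})=O(e^{-2/\alpha})$ this gives $u:=\alpha t-1=O(e^{-2/\alpha})$ and $t=(1+u)/\alpha$. Next I would bootstrap: writing $\coth t-1=2e^{-2t}/(1-e^{-2t})$ and $e^{-2t}=e^{-2/\alpha}e^{-2u/\alpha}$, the equation $\alpha t=\coth t$ becomes the fixed-point relation
\[
u=\frac{2e^{-2/\alpha}e^{-2u/\alpha}}{1-e^{-2/\alpha}e^{-2u/\alpha}}.
\]
The a priori bound $u=O(e^{-2/\alpha})$ forces $u/\alpha=O(e^{-2/\alpha}/\alpha)\to0$, hence $e^{-2u/\alpha}=1+O(e^{-2/\alpha}/\alpha)$; substituting this back into the displayed identity yields
\[
u=2e^{-2/\alpha}\bigl(1+O(e^{-2/\alpha}/\alpha)\bigr).
\]

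Finally I would translate this back into the norm. From $\gamma=\alpha/(1+u)$ one gets $\alpha-\gamma=\alpha u/(1+u)$ and $\alpha+\gamma=\alpha(2+u)/(1+u)$, so
\[
\alpha^2-\gamma^2=\frac{\alpha^2u(2+u)}{(1+u)^2}=4\alpha^2e^{-2/\alpha}\bigl(1+O(e^{-2/\alpha}/\alpha)\bigr),
\]
using the expansion of $u$ together with $u=O(e^{-2/\alpha})$. Taking reciprocal square roots,
\[
\|(V-\alpha I)^{-1}\|=\frac{e^{1/\alpha}}{2\alpha}\bigl(1+O(e^{-2/\alpha}/\alpha)\bigr)=\frac{e^{1/\alpha}}{2\alpha}+O\Bigl(\frac{e^{-1/\alpha}}{\alpha^2}\Bigr),
\]
which is exactly the assertion. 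The only real obstacle is careful bookkeeping of the error terms — in particular one must secure the a priori bound $u=O(e^{-2/\alpha})$ before linearising $e^{-2u/\alpha}$, since otherwise the bootstrap step is unjustified; once the defining relation has been recast as $\coth t=\alpha t$, everything else is elementary.
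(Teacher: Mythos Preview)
Your proof is correct and follows essentially the same route as the paper: apply part~\ref{I:i} with $\beta=0$ and analyse the implicit equation $\coth(1/\gamma)=\alpha/\gamma$ asymptotically as $\alpha\to0^+$. The paper's execution is a touch slicker, observing that $\gamma=\alpha\tanh(1/\gamma)$ gives $\alpha^2-\gamma^2=\alpha^2\operatorname{sech}^2(1/\gamma)$ and hence the exact identity $\|(V-\alpha I)^{-1}\|=\alpha^{-1}\cosh(1/\gamma)$, after which only the crude estimate $1/\gamma=\alpha^{-1}(1+O(e^{-2/\alpha}))$ is needed; your bootstrap on $u=\alpha t-1$ achieves the same thing by direct expansion.
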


\begin{proof}
By part~\ref{I:i} of Theorem~\ref{T:main}, if $\alpha>0$, then
\[
\| (V-\alpha I)^{-1}\|=\frac{1}{\sqrt{\alpha^2-\gamma^2}},
\]
where $\gamma$ is the unique solution in $(0,\alpha)$ to $\coth(1/\gamma)=\alpha/\gamma$.
As $\alpha\to0^+$, we also have $\gamma\to0^+$, so
\[
\gamma=\alpha\tanh(1/\gamma)=\alpha(1+O(e^{-2/\gamma}))=\alpha(1+O(e^{-2/\alpha})).
\]
Hence, as $\alpha\to0^+$,
\begin{align*}
\| (V-\alpha I)^{-1}\|&=\frac{1}{\sqrt{\alpha^2-\gamma^2}}=\frac{1}{\sqrt{\alpha^2-\alpha^2\tanh(1/\gamma)}}=\frac{1}{\alpha}\cosh(1/\gamma)\\
&=\frac{1}{\alpha}\cosh\Bigl(\frac{1}{\alpha}(1+O(e^{-2/\alpha}))\Bigr)=\frac{e^{1/\alpha}}{2\alpha}+O\Bigl(\frac{e^{-1/\alpha}}{\alpha^2}\Bigr).\qedhere
\end{align*}
\end{proof}


\section{Proof of Theorem~\ref{T:main}}\label{S:proof}

The proof that follows is essentially a rehash of the ideas in \cite{LT10}.

Consider the operator $(V-\mu I)^*(V-\mu I)$, where $\mu=\alpha+i\beta\in\CC\setminus\{0\}$.
It equal to $|\mu|^2I$ plus a compact self-adjoint operator,
so its spectrum consists of $|\mu|^2$, together with a sequence of real eigenvalues converging to~$|\mu|^2$.
Further, since $(V-\mu V)^*(V-\mu I)$ is an invertible positive operator, 
all the eigenvalues are strictly positive. If the smallest eigenvalue $\lambda$ is less than $|\mu|^2$,
then $\|(V-\mu I)^{-1}\|=1/\sqrt{\lambda}$. Otherwise $\|(V-\mu I)^{-1}\|=1/|\mu|$.

Therefore we seek to identify all the eigenvalues $\lambda$ of $(V-\mu I)^*(V-\mu I)$
such that $\lambda<|\mu|^2$.
Let $\lambda$ be such an eigenvalue, say $\lambda=|\mu|^2-\delta$, where $\delta\in(0,|\mu|^2)$.
Then there exists $f\in L^2[0,1]\setminus\{0\}$ such that
\[
(V-\mu I)^*(V-\mu I)f=(|\mu|^2-\delta) f.
\]
Explicitly, this means that
\begin{equation}\label{E:explicit}
\delta f(x)-\overline{\mu}\int_0^x f-\mu\int_x^1f +\int_x^1\int_0^t f(u)\,du\,dt=0\quad\text{a.e.}
\end{equation}
This expresses $f(x)$ as the integral of an $L^2$ function,
so, after changing $f$ on a set of measure zero, we may suppose that $f$ is continuous. Using \eqref{E:explicit} to `bootstrap', we deduce that in fact $f\in C^\infty[0,1]$.

For $f\in\ C^\infty[0,1]$, the integral equation \eqref{E:explicit} is equivalent to the ODE 
\begin{equation}\label{E:ode}
\delta f''(x)+2i\beta f'(x)- f(x)=0,
\end{equation}
together with the two boundary conditions
\begin{equation}\label{E:bc}
\left\{
\begin{aligned}
\delta f'(0)+2i\beta f(0)&=0,\\
\delta f(1)-\overline{\mu}\int_0^1 f&=0.
\end{aligned}
\right.
\end{equation}
If we try the solution $f(x):=e^{\omega x}$ in \eqref{E:ode}, 
then we obtain the characteristic polynomial
\begin{equation}\label{E:quadratic}
\delta \omega^2+2i\beta\omega-1=0.
\end{equation}
The argument now divides into three cases, depending on whether the
discriminant of this polynomial, namely $4(\delta-\beta^2)$,
is positive,  zero or negative.
Note that the discriminant is equal to $4(\alpha^2-\lambda)$,
so the three cases correspond to whether $\lambda$ is less than, equal to, or greater than $\alpha^2$.
We shall treat them in that order, since we are seeking the smallest eigenvalue~$\lambda$.

\bigbreak

\textbf{Case I:} $\delta>\beta^2$.

In this case, the general solution to \eqref{E:ode} is
\[
f(x)=A_1e^{\omega_1x}+A_2e^{\omega_2 x},
\]
where $A_1,A_2\in\CC$, and $\omega_1,\omega_2$ are the solutions to the quadratic equation
\eqref{E:quadratic}, namely 
\[
\omega_1,\omega_2=\frac{-i\beta\pm\sqrt{\delta-\beta^2}}{\delta}.
\]
Substituting the expression for $f(x)$ into \eqref{E:bc}, we obtain
\begin{equation}\label{E:bcexplicit1}
\left\{
\begin{aligned}
\delta(A_1\omega_1+A_2\omega_2)+2i\beta (A_1+A_2)&=0,\\
\delta(A_1e^{\omega_1}+A_2e^{\omega_2})-\overline{\mu}\Bigl((A_1/\omega_1)(e^{\omega_1}-1)+(A_2/\omega_2)(e^{\omega_2}-1)\Bigr)&=0.
\end{aligned}
\right.
\end{equation}
Since  $2i\beta=-\delta(\omega_1+\omega_2)$,
the first condition in \eqref{E:bcexplicit1} simplifies to
\[
A_1/\omega_1+A_2/\omega_2=0.
\]
Substituting this information into the second condition in \eqref{E:bcexplicit1}, 
and simplifying, we obtain the relation
\[
\delta(\omega_1e^{\omega_1}-\omega_2e^{\omega_2})(A_1/\omega_1)
-\overline{\mu}(e^{\omega_1}-e^{\omega_2})(A_1/\omega_1)=0.
\]
Since $f\not\equiv0$, we must have $A_1\ne0$, and so this last relation becomes
\begin{equation}\label{E:messy}
\delta(\omega_1e^{\omega_1}-\omega_2e^{\omega_2})
-\overline{\mu}(e^{\omega_1}-e^{\omega_2})=0.
\end{equation}
Now, recalling that $\mu=\alpha+i\beta$ and $\omega_j=(-i\beta\pm \gamma)/\delta$, 
where
\[
\gamma:=\sqrt{\delta-\beta^2},
\]
we can rearrange \eqref{E:messy} to get
\[
e^{2\gamma/\delta}=\frac{\alpha+\gamma}{\alpha-\gamma},
\]
which in turn is equivalent to
\begin{equation}\label{E:gammaeqn1}
\coth\Bigl(\frac{\gamma}{\beta^2+\gamma^2}\Bigr)=\frac{\alpha}{\gamma}.
\end{equation}
So we seek solutions $\gamma$ to \eqref{E:gammaeqn1} such that $\gamma>0$.
We have to consider various sub-cases. 

\begin{itemize}
\item If $\alpha\le 0$, then obviously no solution to \eqref{E:gammaeqn1} with $\gamma>0$ is possible.
\item If $\alpha>0$ and $\beta=0$, then the left-hand side of \eqref{E:gammaeqn1} reduces to $\coth(1/\gamma)$. 
Now, as $\gamma$ increases from $0$ to $\infty$,
the function  $\coth(1/\gamma)$ increases from $1$ to $+\infty$,
whereas $\alpha/\gamma$ decreases from $+\infty$ to $0$.
Hence, in this case, there is a unique solution $\gamma>0$ to  \eqref{E:gammaeqn1}.
Clearly $\gamma\in(0,\alpha)$.
\item If $\alpha>0$ and $\beta\ne0$, then solving \eqref{E:gammaeqn1} amounts to finding the positive fixed points of the 
function $\Phi(\gamma):=\alpha\tanh(\gamma/(\beta^2+\gamma^2))$.
This function is increasing and concave for $0\le \gamma\le|\beta|$ and decreasing for $\gamma\ge|\beta|$. 
Therefore $\Phi$ has a positive fixed point if and only if $\Phi'(0)>1$,
and in this case the solution is unique and lies in $(0,\alpha)$. A calculation shows that $\Phi'(0)=\alpha/\beta^2$, so the condition for a solution to exist is that $\alpha>\beta^2$. 
\end{itemize}

To summarize, we have proved that,
if $\alpha>\beta^2$, then \eqref{E:gammaeqn1} has a unique solution $\gamma\in(0,\alpha)$. Working backwards, we see that the corresponding value of $\lambda$, namely
\[
\lambda=|\mu|^2-\delta=|\mu|^2-(\beta^2+\gamma^2)=\alpha^2-\gamma^2,
\]
is the smallest eigenvalue of $(V-\mu I)^*(V-\mu I)$. This establishes part~\ref{I:i} of the theorem.

We have also proved that, if $\alpha\le \beta^2$, then \eqref{E:gammaeqn1} has no positive solution.
So, in this case,  there are no eigenvalues $\lambda=|\mu|^2-\delta$ of $(V-\mu I)^*(V-\mu I)$ such that
$\delta>\beta^2$, i.e.,  no eigenvalues satisfy $\lambda<\alpha^2$.

\bigbreak

\textbf{Case II:} $\delta=\beta^2$.

In this case, the general solution to \eqref{E:ode} is
\[
f(x)=Ae^{\omega x}+Bxe^{\omega x},
\]
where $A,B\in\CC$, and $\omega$ is the (double) root of the quadratic equation
\eqref{E:quadratic}, namely 
\[
\omega=-i\beta/\delta.
\]
Substituting the expression for $f$ into \eqref{E:bc}, we obtain
\begin{equation}\label{E:bcexplicit2}
\left\{
\begin{aligned}
\delta(A\omega+B)-2i\beta A&=0,\\
\delta(Ae^{\omega}+Be^{\omega})-\overline{\mu}\Bigl((A/\omega)(e^{\omega}-1)+(B/\omega)e^\omega-(B/\omega^2)(e^{\omega}-1)\Bigr)&=0.
\end{aligned}
\right.
\end{equation}
Since  $i\beta=-\delta\omega$,
the first condition in \eqref{E:bcexplicit2} simplifies to
\[
B=\omega A.
\]
Substituting for $B$ in the second condition in \eqref{E:bcexplicit2}, 
and simplifying, we obtain the relation
\[
\delta(1+\omega)A-\overline{\mu} A=0.
\]
Since $f\not\equiv0$, we must have $A\ne0$, and so this last relation becomes
\[
\delta(1+\omega)-\overline{\mu}=0.
\]
Since $\delta\omega=-i\beta$ and $\mu=\alpha+i\beta$, the last relation reduces simply to
\[
\delta=\alpha.
\]
In conjunction with the condition $\delta=\beta^2$, this implies that 
$\alpha=\beta^2$ and $\lambda=\alpha^2$.

To summarize, we have proved that, if $\alpha=\beta^2$,
then the smallest eigenvalue of  $(V-\mu I)^*(V-\mu I)$ is equal to $\alpha^2$.
This establishes part~\ref{I:ii} of the theorem.

\bigbreak

\textbf{Case III:} $\delta<\beta^2$.

In this case, the general solution to \eqref{E:ode} is
\[
f(x)=A_1e^{\omega_1x}+A_2e^{\omega_2 x},
\]
where $A_1,A_2\in\CC$, and $\omega_1,\omega_2$ are the solutions to the quadratic equation
\eqref{E:quadratic}, namely 
\[
\omega_1,\omega_2=\frac{-i\beta\pm i\sqrt{\beta^2-\delta}}{\delta}.
\]
Repeating the analysis carried out in Case~I leads to the equation
\[
e^{2i\gamma/\delta}=\frac{\alpha+i\gamma}{\alpha-i\gamma},
\]
where now
\[
\gamma:=\sqrt{\beta^2-\delta}\in(0,\,|\beta|).
\]
This is equivalent to saying that
\[
\frac{2\gamma}{\delta}
=2\arg\Bigl(\alpha+i\gamma\Bigr)+2n\pi \quad(n\in\ZZ),
\]
which in turn is equivalent to the relation
\begin{equation}\label{E:gammaeqn3}
\cot\Bigl(\frac{\gamma}{\beta^2-\gamma^2}\Bigr)
=\frac{\alpha}{\gamma}.
\end{equation}
So we seek solutions $\gamma$ to \eqref{E:gammaeqn3} such that $0<\gamma<|\beta|$,
in particular the smallest such solution.

Just as in Case~I, we have to consider various sub-cases. 
\begin{itemize}
\item If $\beta=0$, then obviously no solution $\gamma$ with $0<\gamma<|\beta|$ is possible.
\item If $\alpha=0$, then \eqref{E:gammaeqn3} holds if and only if there is an integer $n$ 
such that $\gamma/(\beta^2-\gamma^2)=\pi/2+n\pi$.
The smallest positive $\gamma$ corresponds to taking $n=0$, i.e.
$\gamma/(\beta^2-\gamma^2)=\pi/2$.
\item If $\alpha\ne0$ and $\beta\ne0$, then solving \eqref{E:gammaeqn3} is equivalent to finding a positive fixed
point $\gamma$ of the function
\[
\Psi(\gamma):=\alpha\tan\Bigl(\frac{\gamma}{\beta^2-\gamma^2}\Bigr).
\]
Let $\gamma_1,\gamma_2$ be the positive  solutions $\gamma$ of the equation 
$\gamma/(\beta^2-\gamma^2)=\pi/2,\pi$ respectively.
\begin{itemize}
\item If $\alpha<0$, then $\Psi(\gamma)<0$ for $\gamma\in(0,\gamma_1)$,
so there is no fixed point for $\gamma$ in this range. 
Also, as $\gamma$ runs from $\gamma_1$ up to $\gamma_2$, 
the function $\Psi(\gamma)$ decreases from $+\infty$ to $0$, 
so there is a unique fixed point for $\gamma$ in this range. 
It is clearly the smallest positive fixed point.
\item If $\alpha>0$, then $\Psi(0)=0$ and $\Psi(\gamma_1^-)=+\infty$, 
and $\Psi(\gamma)$ is convex and increasing on $(0,\gamma_1)$.
Thus $\Psi(\gamma)$ has a fixed point in $(0,\gamma_1)$ if and only if $\Psi'(0)<1$, 
and in this case the fixed point is unique, and it is clearly the smallest positive fixed point.
A computation gives $\Psi'(0)=\alpha/\beta^2$, so the condition $\Psi'(0)<1$ translates to $\alpha<\beta^2$.
\end{itemize}
\end{itemize}

To summarize, we have proved that, if $\alpha<\beta^2$ and $\beta\ne0$, 
then \eqref{E:gammaeqn3} has a unique positive solution $\gamma$ such that  $\gamma/(\beta^2-\gamma^2)\in(0,\pi)$,
and this is the smallest positive solution. 
Working backwards, we see that the corresponding
value of $\lambda$, namely
\[
\lambda=|\mu|^2-\delta=|\mu|^2-(\beta^2-\gamma^2)=\alpha^2+\gamma^2,
\]
is the smallest eigenvalue of $(V-\mu I)^*(V-\mu I)$. This establishes part~\ref{I:iii}  of the theorem.

We have also proved that, if $\alpha<0$ and $\beta=0$,
then there are no eigenvalues $\lambda$ of $(V-\mu I)^*(V-\mu I)$ 
such that $\lambda<|\mu|^2$. Consequently
$\|(V-\mu I)^{-1}\|=1/|\mu|=1/|\alpha|$ in this case.
This establishes part~\ref{I:iv} of the theorem, and concludes the proof.\qed

\end{document}